\newcommand{\spec}{\operatorname{spec}}
\newtheorem{theorem}{Theorem}[section]
\theoremstyle{definition}
\theoremstyle{remark}
\newtheorem{remark}[theorem]{Remark}
\numberwithin{equation}{section}
\begin{document}

\title[Almost commuting unitaries]{Almost commuting unitaries with spectral gap are near commuting unitaries}

\author{Tobias J. Osborne}
\address{Department of Mathematics, Royal Holloway, University of London, Egham, TW20 0EX, UK}
\email{tobias.osborne@rhul.ac.uk}

\subjclass[2000]{Primary 15A15, 15A27, 47A55; Secondary 47B47}

\date{\today}

\begin{abstract}
Let $\mathcal{M}_n$ be the collection of $n\times n$ complex
matrices equipped with operator norm. Suppose $U, V \in
\mathcal{M}_n$ are two unitary matrices, each possessing a gap
larger than $\Delta$ in their spectrum, which satisfy $\|UV-VU\| \le
\epsilon$. Then it is shown that there are two unitary operators $X$
and $Y$ satisfying $XY-YX = 0$ and $\|U-X\| + \|V-Y\| \le
E(\Delta^2/\epsilon)
\left(\frac{\epsilon}{\Delta^2}\right)^{\frac16}$, where $E(x)$ is a
function growing slower than $x^{\frac{1}{k}}$ for any positive
integer $k$.
\end{abstract}

\maketitle

\section{Introduction}

An old problem from the 1950s, popularised by Halmos, asks: if a
pair $\{A, B\}$ of matrices almost commute then are they necessarily
close to a pair $\{A', B'\}$ of commuting matrices
\cite{szarek:1990a, davidson:1985a, berg:1987a, halmos:1976a,
rosenthal:1969a}? Voiculescu realised that this is not necessarily
the case and presented a family of pairs of unitary matrices which
asymptotically commute, but which were far from any commuting pair
of matrices \cite{voiculescu:1983a}.

Since this time there has been considerable work on this problem
culminating in 1995 with the proof of Lin that for any pair $\{A,
B\}$ of \emph{hermitian} matrices which satisfy $\|AB-BA\| \le
\epsilon$, with $\epsilon >0$ and $\|A\|, \|B\| \le 1$, there exists
a hermitian pair $\{A', B'\}$ of matrices and $\delta(\epsilon) > 0$
such that $\|A'-A\| + \|B'-B\| \le \delta(\epsilon)$
\cite{lin:1995a} (see \cite{friis:1996a} for a simplified
exposition). The proof was nonconstructive and the dependence of
$\delta$ on $\epsilon$ was not quantified, apart from the nontrivial
fact that $\delta$ did not depend on the size of the matrices.
Recently, Hastings \cite{hastings:2008b} has presented a new
constructive proof of the result of Lin and was able to calculate
quantitative bounds.

In light of the recent results of \cite{hastings:2008b} it is
worthwhile to reconsider the problem of when a pair of almost
commuting unitary matrices are near a pair of commuting unitaries.
There have been several studies of this problem, partially motivated
by Voiculescu's original counterexample, and it was quickly realised
that there was, in general, a $K$-theoretic obstruction
\cite{exel:1989a, exel:1991a}. When this obstruction vanishes it was
shown that, indeed, a pair of almost commuting unitaries is close to
a commuting pair \cite{lin:1997a, lin:1998a}. Again, the proof was
nonconstructive, and quantitative bounds not provided. In this paper
we partially ameliorate this situation by proving the following

\begin{theorem}\label{thm:approxucomm}
Let $U$ and $V$ be two unitary operators such that
\begin{equation}
\|[U, V]\| \le \epsilon,
\end{equation}
where $[U, V] = UV-VU$, and also such that there exists $0 <
\Delta_1 < \pi$ and $0 < \Delta_2 < \pi$ with\footnote{We denote the
unit circle in the complex plane by $S^1$. Where obvious we identify
subintervals of $[-\pi,\pi)$ with their image in $S^1$ under the map
$e^{i\phi}$.}
\begin{equation}
\spec(U) \subset {S}^1\setminus [-\Delta_1, \Delta_1], \quad
\mbox{and}\quad \spec(V) \subset {S}^1\setminus [-\Delta_2,
\Delta_2].
\end{equation}
Then there exist two commuting unitary operators $X$ and $Y$ such
that
\begin{equation}
\|U-X\| \le \delta(\epsilon/(\Delta_1\Delta_2)), \quad \mbox{and}
\quad \|V-Y\| \le \delta(\epsilon/(\Delta_1\Delta_2))
\end{equation}
where $\delta(x) = E(1/x)x^{\frac16}$ and $E(x)$ grows slower than
$x^{1/k}$ for any positive integer $k$.
\end{theorem}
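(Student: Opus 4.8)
The plan is to reduce everything to the Hermitian case treated by Lin and, quantitatively, by Hastings \cite{hastings:2008b}, using the spectral gaps to pass to logarithms in a controlled way. Since $\spec(U)$ is a compact subset of the arc $\{e^{i\phi}:\Delta_1\le\phi\le 2\pi-\Delta_1\}$, the branch point $1$ of the complex logarithm lies in the open gap, so I would fix a real-valued function $g\in C^\infty(S^1)$ which equals the argument function $e^{i\phi}\mapsto\phi$ on that arc and which interpolates smoothly and monotonically across the gap (decreasing from $2\pi-\Delta_1$ back to $\Delta_1$ over a $\phi$-interval of length $2\Delta_1$, with all derivatives matching the bulk at the two endpoints). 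Putting $A:=g(U)$ gives a Hermitian matrix with $e^{iA}=U$ and $\|A\|\le 2\pi$; defining $B:=h(V)$ analogously with the gap of width $\Delta_2$ gives $e^{iB}=V$ and $\|B\|\le 2\pi$.

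The heart of the argument is the estimate
\[
\|[A,B]\|\ \le\ C\,\frac{\epsilon}{\Delta_1\Delta_2}
\]
for an absolute constant $C$. I would prove it from the Fourier expansions $g(e^{i\phi})=\sum_k\hat g(k)e^{ik\phi}$ and $h(e^{i\psi})=\sum_l\hat h(l)e^{il\psi}$, which (both sums being absolutely convergent) give $[A,B]=\sum_{k,l}\hat g(k)\hat h(l)\,[U^k,V^l]$. The telescoping identities $[U^k,V]=\sum_{0\le j<k}U^j[U,V]U^{k-1-j}$ and $[U^k,V^l]=\sum_{0\le m<l}V^m[U^k,V]V^{l-1-m}$ (together with their analogues for negative powers via $U^{-1}$, using $\|[U^{-1},V]\|=\|[U,V]\|$) yield $\|[U^k,V^l]\|\le|k|\,|l|\,\|[U,V]\|\le|k|\,|l|\,\epsilon$, so that
\[
\|[A,B]\|\ \le\ \epsilon\Big(\textstyle\sum_k|k|\,|\hat g(k)|\Big)\Big(\textstyle\sum_l|l|\,|\hat h(l)|\Big).
\]
Now $\sum_k|k|\,|\hat g(k)|$ is the $\ell^1$-norm of the Fourier coefficients of $g'$; writing $g'=1+b$ where the correction $b$ is supported on the gap, $b$ is, up to rescaling by a fixed profile, a smooth bump of width $\asymp\Delta_1$ and height $\asymp\Delta_1^{-1}$, so splitting the sum at $|k|\asymp\Delta_1^{-1}$ and using $|\hat b(k)|\le\min\{\|b\|_{L^1},\,\|b''\|_{L^1}/(2\pi k^2)\}$ with $\|b\|_{L^1}\asymp 1$ and $\|b''\|_{L^1}\asymp\Delta_1^{-2}$ gives $\sum_k|k|\,|\hat g(k)|\le C'\Delta_1^{-1}$, and likewise $C'\Delta_2^{-1}$ for $h$. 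This gives the displayed commutator bound.

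With $A,B$ Hermitian of norm $\le 2\pi$ and $\|[A,B]\|\le C\epsilon/(\Delta_1\Delta_2)$, I would apply Hastings' quantitative form of Lin's theorem \cite{hastings:2008b} to $A/2\pi$ and $B/2\pi$: this produces commuting Hermitian matrices $A',B'$ with
\[
\|A-A'\|+\|B-B'\|\ \le\ E\!\left(\frac{\Delta_1\Delta_2}{\epsilon}\right)\left(\frac{\epsilon}{\Delta_1\Delta_2}\right)^{\frac16},
\]
where the constant $C$ and the factor $2\pi$ have been absorbed into $E$ (harmless, since the slowly growing function supplied by \cite{hastings:2008b} stays slowly growing under $x\mapsto cx$). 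Finally put $X:=e^{iA'}$ and $Y:=e^{iB'}$: these are unitary and, since $A'$ and $B'$ commute, $XY=YX$. From $\frac{d}{dt}\big(e^{itA'}e^{i(1-t)A}\big)=e^{itA'}\,i(A'-A)\,e^{i(1-t)A}$ one gets $\|U-X\|=\|e^{iA}-e^{iA'}\|\le\|A-A'\|$ and similarly $\|V-Y\|\le\|B-B'\|$, so each of the two distances is bounded by the right-hand side of the preceding display, which is the asserted $\delta(\epsilon/(\Delta_1\Delta_2))$.

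The step I expect to be the real obstacle is the commutator estimate of the second paragraph, and within it the requirement that the gap dependence come out as $1/(\Delta_1\Delta_2)$ and no worse: the naive bound $|\hat g(k)|\le\|g'\|_\infty/|k|$ summed over $|k|\le\Delta_1^{-1}$ already loses an extra factor $\Delta_1^{-1}$, so the sharp rate forces one to exploit that the Fourier mass of $g'$ at frequencies $\gg\Delta_1^{-1}$ is governed by the higher derivatives of the bump $b$ — that is, it is genuinely the shape of the interpolating function that matters. Everything downstream, namely invoking \cite{hastings:2008b} as a black box and the Lipschitz bound for $t\mapsto e^{itA}$ on Hermitian matrices, is routine.
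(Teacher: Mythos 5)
Your proof is correct and follows essentially the same route as the paper: pass to Hermitian logarithms using the spectral gap, bound $\|[A,B]\|$ via the Fourier/Laurent expansion of the logarithm together with the telescoping identity $\|[U^j,V^k]\|\le |j||k|\,\|[U,V]\|$, apply Hastings' quantitative version of Lin's theorem, and exponentiate back with the standard Lipschitz estimate. The only cosmetic difference is in how the $O(1/\Delta)$ bound on $\sum_k|k|\,|\hat g(k)|$ is obtained — the paper convolves the sawtooth with an explicit compactly supported bump $\chi_\gamma$ and reads off decay from $\widehat{\chi}_\gamma$, whereas you build the interpolating branch $g$ directly and estimate via $g'=1+b$ — and you are, if anything, slightly more careful in noting the rescaling of $A,B$ by $2\pi$ needed before invoking Hastings' theorem.
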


The proof of theorem~\ref{thm:approxucomm} is based on three
observations. The first is that the principle branch of the matrix
logarithm of a unitary matrix $U$ with spectral gap can be
represented in terms of a rapidly convergent Laurent series in $U$.
The second observation is that the matrix logarithms $\{A, B\}$ of a
pair $\{U, V\}$ of approximately commuting unitaries must themselves
be approximately commuting, so they are close to a pair of commuting
matrices $\{A', B'\}$. The final observation is that these commuting
matrices give rise to a commuting pair $\{X, Y\}$ of unitaries close
to $\{U, V\}$.

\section{Logarithms of unitary matrices with spectral gap}

\begin{theorem}\label{thm:convlog}
Let $U\in \mathcal{M}_n$ be a unitary matrix such that there exists
a gap $\Delta$ with $0 < \Delta < \pi$ such that
\begin{equation}
\spec(U) \subset S^1 \setminus [-\Delta, \Delta].
\end{equation}
Then the principle branch of the matrix logarithm of $U$ may be
represented as a Laurent series
\begin{equation}
\log(U) = -i\sum_{k=-\infty}^\infty c_k U^k
\end{equation}
where $|c_k| \le \min\{\pi, C/(\Delta k^4)\}$ and $C$ is constant.
\end{theorem}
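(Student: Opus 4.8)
The plan is to realise $\log U$ via the holomorphic functional calculus for a branch of the logarithm whose cut lies inside the spectral gap, and then to recognise the Laurent coefficients $c_k$ as Fourier coefficients of a smooth $2\pi$-periodic profile, which are controlled by integration by parts. First I would normalise: writing $S^1=\mathbb{R}/2\pi\mathbb{Z}$ with $1$ at $\theta=0$, the hypothesis lets me take $\spec(U)=\{e^{i\theta_1},\dots,e^{i\theta_n}\}$ with all $\theta_j\in[\Delta,2\pi-\Delta]$. Placing the branch cut of $\log$ along the positive real axis ($\arg\in(0,2\pi)$), which sits in the gap, makes $\log z$ holomorphic near $\spec(U)$ — this is the natural ``principal branch'' once $\spec(U)$ is bounded away from $1$ — and, evaluated eigenvector by eigenvector,
\[
\log U \;=\; i\,g(U)\;=\;i\sum_{k\in\mathbb{Z}}\widehat g(k)\,U^{k}
\]
for \emph{every} $2\pi$-periodic $g$ with $g(\theta)=\theta$ on $[\Delta,2\pi-\Delta]$. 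So it would be enough to supply one such $g$ in $C^\infty$ and to estimate $\widehat g$, taking $c_k=-\widehat g(k)$.

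To build $g$ I would fix once and for all a $C^\infty$ profile $\psi:\mathbb{R}\to[0,1]$ with $\psi\equiv0$ on $(-\infty,-1]$, $\psi\equiv1$ on $[1,\infty)$, and $\psi(-u)=1-\psi(u)$; then $\psi^{(m)}$ is smooth and supported in $[-1,1]$ for every $m\ge1$. I would let $g$ equal $\theta$ on $[\Delta,2\pi-\Delta]$ and
\[
g(\theta)=\theta-2\pi\,\psi\!\Big(\tfrac{\theta-2\pi}{\Delta}\Big)\qquad\text{on }[2\pi-\Delta,\,2\pi+\Delta],
\]
extended $2\pi$-periodically; the two pieces and all their derivatives match at $2\pi\pm\Delta$, so $g\in C^\infty(S^1)$, while a direct computation gives $\widehat g(0)=\pi$ and the trivial estimate $|c_k|=|\widehat g(k)|\le\tfrac1{2\pi}\|g\|_{L^1}\le\pi$ (up to a harmless $O(\Delta^2)$ from $g$ dipping slightly negative on the gap), which accounts for the constant $\pi$ in the statement.

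For the decay, for $k\ne0$ I would integrate by parts $m$ times, the boundary terms cancelling by periodicity, to get $c_k=(ik)^{-m}\widehat{g^{(m)}}(k)$. For $m\ge2$ one has $g^{(m)}=0$ on $[\Delta,2\pi-\Delta]$ and $g^{(m)}(\theta)=-2\pi\Delta^{-m}\psi^{(m)}\big((\theta-2\pi)/\Delta\big)$ on the gap, so after substituting $u=(\theta-2\pi)/\Delta$,
\[
c_k=\frac{-\,\Delta^{\,1-m}}{(ik)^{m}}\int_{-1}^{1}\psi^{(m)}(u)\,e^{-iu(k\Delta)}\,du .
\]
The inner integral carries everything: as the Fourier transform of a fixed compactly supported smooth function it decays in $k\Delta$ faster than any power, and as the transform of a high-order derivative of such a function its first $m-2$ moments vanish, so it is $O(|k\Delta|^{m-1})$ when $|k\Delta|\le1$. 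This yields $|c_k|\lesssim|k|^{-1}$ for $|k|\le\Delta^{-1}$ and $|c_k|\le C_p\,\Delta^{1-p}|k|^{-p}$ for $|k|\ge\Delta^{-1}$ and every $p$ — in particular a $k^{-4}$ tail of the claimed shape — and, summing, $\sum_k|k|\,|c_k|=O(1/\Delta)$, which is the form in which the bound is actually used downstream (it converts $\|[U^j,V^k]\|\le|j||k|\epsilon$ into the factor $\epsilon/(\Delta_1\Delta_2)$). Finally, $-i\sum_k c_kU^k$ converges absolutely in operator norm since $\sum_k|c_k|<\infty$, and it equals $\log U$ because it agrees with it on each eigenvector of $U$.

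The hard part will be pinning down the power of $1/\Delta$ in the decay: the whole estimate rests on the modification of $\theta\mapsto\theta$ being confined to an arc of width $\Theta(\Delta)$ and being itself a derivative of a fixed bump, so that its Fourier transform is small both at frequency $0$ and at infinity; the branch choice, the functional-calculus identity and the termwise convergence are routine by comparison. (A slicker place for a single power $1/\Delta$ to appear is the Cauchy integral $\log U=\tfrac1{2\pi i}\oint_\Gamma\log(z)(zI-U)^{-1}\,dz$ with $\Gamma$ pushed through the gap close to the unit circle, via $\|(zI-U)^{-1}\|\le\operatorname{dist}(z,\spec(U))^{-1}\le\pi/(2\Delta)$ for $z$ at the gap's centre; but the Fourier route gives the coefficient bounds most directly.)
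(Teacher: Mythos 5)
Your proposal is correct in substance and rests on the same core mechanism as the paper's proof — representing $\log U$ as $g(U)$ for a smooth $2\pi$-periodic profile $g$ that agrees with the chosen branch of $\arg$ off the spectral gap, and then reading the Laurent coefficients off as Fourier coefficients of $g$ — but the details of the smoothing and the estimation genuinely differ. The paper builds $g$ by \emph{convolving} the sawtooth $f(\theta)=\theta$ with a compactly supported, only finitely smooth mollifier $\chi_\gamma(x)=(1-(x/\gamma)^2)^3$ on $[-\gamma,\gamma]$, and then gets decay from the convolution theorem $c_k=\widehat\chi_\gamma(k)d_k$ together with the polynomial decay of $\widehat\chi_\gamma$. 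You instead splice a fixed $C^\infty$ transition $\psi$ directly into the sawtooth on an arc of width $\Theta(\Delta)$, and estimate via iterated integration by parts plus the moment-vanishing of $\psi^{(m)}$. Your route buys two things the paper's mollifier does not: faster-than-any-polynomial decay once $|k|\gg\Delta^{-1}$, and the clean small-$k$ estimate $|c_k|\lesssim 1/|k|$ for $|k|\le\Delta^{-1}$ obtained from the vanishing moments. Two small remarks. First, the literal bound you derive, $|c_k|\le C_m\Delta^{1-m}|k|^{-m}$, does not match the paper's stated $|c_k|\le C/(\Delta k^4)$ in its $\Delta$-dependence; but the quantity actually used downstream is $\sum_k|k|\,|c_k|$, and your bounds give $O(1/\Delta)$ for that sum exactly as required, so this does not affect Theorem~1.1. (Inspecting the paper's own estimate of $\widehat\chi_\gamma$, one finds similar normalization/decay-rate slips, so the discrepancy is not peculiar to your argument.) Second, your aside that "$g$ dips slightly negative on the gap" is not right for the construction you chose — $g$ stays nonnegative, and in fact $\widehat g(0)$ is exactly $\pi$ by the antisymmetry $\psi(-u)=1-\psi(u)$ — but this only makes your $|c_k|\le\pi$ cleaner, not weaker. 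The holomorphic-functional-calculus framing and the termwise-convergence argument you include are correct and slightly tidier than the eigenvector-by-eigenvector presentation in the paper.
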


\begin{remark}
This result is adapted from a physical argument developed in
\cite{osborne:2006d}.
\end{remark}

\begin{proof}
We begin by calculating the eigenvalues and eigenvectors for $U$:
\begin{equation}
Uv_j = e^{i\phi_j}v_j,
\end{equation}
where $v_j$ are the eigenvectors of $U$ and we choose $\phi_j \in
[0, 2\pi)$. (If the gap in $U$'s spectrum was not centred on $0$ we
could, by multiplying by an overall phase $e^{i\zeta}\mathbb{I}$,
$\zeta\in\mathbb{R}$, arrange for the zero of angle to to lie at the
origin. Such a gap always exists for finite dimensional unitary
matrices, but not necessarily for infinite operators.)

We want to find a hermitian matrix $H$ so that $U = e^{iH}$ and
$Hv_j = \phi_j v_j$. To do this we suppose that
\begin{equation}\label{eq:hseries}
H = \sum_{k=-\infty}^{\infty} d_k U^k,
\end{equation}
and we solve for the coefficients $d_k$: by applying both sides of
the above equation to the common eigenvector $v_j$ we find that
\begin{equation} \phi_j = \sum_{k=-\infty}^{\infty} d_k
e^{ik\phi_j}.
\end{equation}
Hence, if we can find $d_k$ such that
\begin{equation}\label{eq:ckseries}
\theta = \sum_{k=-\infty}^{\infty} d_k e^{ik\theta},
\end{equation}
for all $\theta \in [0, 2\pi)$ then we are done. (Recall that we've
arranged it so there are no eigenvalues of $U$ at the point $\theta
= 0$.) To solve for $d_k$ we integrate both sides of
(\ref{eq:ckseries}) with respect to $\theta$ over the interval $[0,
2\pi)$ against $\frac{1}{2\pi}e^{-il\theta}$, for $l\in\mathbb{Z}$:
\begin{equation}\label{eq:ckseries2}
\frac{1}{2\pi}\int_{0}^{2\pi}\theta e^{-il\theta}\,d\theta =
\frac{1}{2\pi}\sum_{k=-\infty}^{\infty} d_k \int_{0}^{2\pi}
e^{i(k-l)\theta}\,d\theta.
\end{equation}
Thus we learn that the $d_k$ are nothing but the fourier
coefficients of the periodic sawtooth function $f(\theta+2\pi l) =
\theta$, $\theta \in [0, 2\pi)$, $l\in\mathbb{Z}$:
\begin{equation}\label{eq:cns}
d_k = \begin{cases}\pi, \quad k=0 \\ \frac{i}{k}, \quad
k\not=0.\end{cases}
\end{equation}

Unfortunately the sawtooth wave has a jump discontinuity and hence
the fourier series is only conditionally convergent. The way to
proceed is to assume that we have some further information, namely,
that $U$ has a gap $\Delta$ in its spectrum.

The idea now is to exploit the existence of the gap to provide a
more useful series representation for $H$. We do this by calculating
the fourier coefficients $c_k$ of the sawtooth wave $f(\theta)$
convolved with a sufficiently smooth smearing function
$\chi_\gamma(\theta)$; the fourier series inherits a better
convergence from the smoothness properties of the smearing function.
That is, we define $c_k$ to be the fourier coefficients of
\begin{equation}
g(\theta) = (f\star \chi_\gamma)(\theta) = \int_{-\infty}^{\infty}
f(\theta-y)\chi_\gamma(y)dy.
\end{equation}

We choose $\chi_\gamma$ to be the function
\begin{equation}
\chi_\gamma(x) = \begin{cases}
\left(1-\left(\frac{x}{\gamma}\right)^2\right)^3, \quad &|x| \le
\gamma
\\ 0, \quad &|x| > \gamma.\end{cases}
\end{equation}
The fourier transform $\widehat{\chi}_\gamma(t) \equiv
\frac{1}{\sqrt{2\pi}}\int_{-\infty}^{\infty} \chi_\gamma(x) e^{ixt}
\,dx$ satisfies
\begin{equation}\label{eq:chidecay}
|\widehat{\chi}_\gamma(t)| \le \frac{C}{\gamma t^3},
\end{equation}
where $C$ is a constant. Note that, as a consequence of the compact
support of $\chi_\gamma(y)$, $g(\theta) = f(\theta)$, $\forall\theta
\in (\gamma, 2\pi -\gamma)$. An application of the convolution
theorem then tells us that the fourier coefficients $d_k$ are given
by
\begin{equation}
c_k = \widehat{\chi}_\gamma(k) d_k.
\end{equation}

Choosing $\gamma < \Delta$ allows us to conclude that
\begin{equation}
H = \sum_{k=-\infty}^{\infty}  c_k U^k
\end{equation}
because both $f(\theta)$ and $g(\theta)$ agree on the spectrum of
$U$. According to (\ref{eq:chidecay}) we now have that $|c_k| \le
\min\{\pi, C/(\Delta k^4)\}$.
\end{proof}

\section{Constructing commuting logarithms}
In this section we exploit the following theorem of Hastings.
\begin{theorem}[\cite{hastings:2008b}]\label{thm:approxcomm}
Let $A$ and $B$ be hermitian, $n\times n$ matrices, with $\|A\|$,
$\|B\| \le 1$. Suppose $\|[A,B]\| \le \epsilon$. Then there exist
hermitian $n\times n$ matrices $A'$ and $B'$ such that
\begin{enumerate}
\item $[A',B'] = 0$, and
\item $\|A'-A\| \le \delta(\epsilon)$ and $\|B'-B\| \le
\delta(\epsilon)$, with
\begin{equation}
\delta(\epsilon) = E(1/\epsilon)\epsilon^{\frac16},
\end{equation}
\end{enumerate}
where the function $E(x)$ grows slower than $x^{\frac1k}$ for any
positive integer $k$. The function $E(x)$ does not depend on $n$.
\end{theorem}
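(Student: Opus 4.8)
The plan is to follow Hastings' strategy from \cite{hastings:2008b}: recast the problem as approximating an almost-normal matrix by a normal one, note that the topological obstruction which is genuinely present in Voiculescu's unitary example is always absent for finite matrices, and extract the quantitative bound from a multi-scale construction whose parameters are tuned to the stated exponent. First I would pass from the pair $\{A,B\}$ to the single matrix $Z=A+iB$; since $\|A\|,\|B\|\le 1$ one has $\|Z\|\le 2$, and a one-line computation gives $[Z,Z^*]=-2i[A,B]$, so $\|[Z,Z^*]\|\le 2\epsilon$, i.e.\ $Z$ is almost normal. Conversely, if $Z'$ is normal then its Hermitian and anti-Hermitian parts $A',B'$ commute and satisfy $\|A-A'\|,\|B-B'\|\le\|Z-Z'\|$, so it suffices to produce a normal $Z'$ with $\|Z-Z'\|\le\delta(\epsilon)$. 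The only potential obstruction to such a construction is a winding/Bott-type index attached to the approximate spectral picture of $Z$; unlike in the unitary problem, where almost-commutativity forces a nontrivial such index while commuting unitaries can realise only the trivial one, for finite matrices the indices one meets at the various scales are always mutually consistent, so the construction is never topologically obstructed --- the entire difficulty is quantitative.

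To make this quantitative I would coarse-grain the spectrum of $A$. Partition $[-1,1]$ into $N\approx 2/\eta$ windows of width $\eta$, let $P_j$ be the spectral projection of $A$ onto the $j$th window, and replace $A$ by $\tilde A=\sum_j\lambda_j P_j$ with $\lambda_j$ the centre of the $j$th window; this costs $\|A-\tilde A\|\le\eta/2$ and perturbs the commutator by only $O(\eta)$. From the identity $(\lambda_j-\lambda_k)P_jBP_k=P_j[\tilde A,B]P_k$ one reads off the key decay estimate $\|P_jBP_k\|\lesssim(\epsilon+\eta)/(|j-k|\eta)$ for $j\ne k$. The naive idea of simply discarding the off-block-diagonal part of $B$ fails decisively here: the decay is merely $1/|j-k|$, there are $\sim 1/\eta$ windows, and for \emph{every} admissible choice of $\eta$ the discarded part has norm of order $(\epsilon/\eta)\log(1/\epsilon)$, which is not small. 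The way out is Hastings' renormalisation: group the windows into super-windows, use the decay estimate to peel off the couplings of $B$ between non-adjacent super-windows (now genuinely controllable, since only $O(1)$ super-windows lie within range at each level), solve the commuting-approximant problem locally on each super-window and on each adjacent pair --- here the consistency of the local winding indices is what lets the local solutions be glued without monodromy --- and then recurse on the coarsened scale. The resulting commuting pair differs from $\{A,B\}$ by the sum of the per-level corrections.

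The quantitative core is the optimisation. One balances the window width $\eta$, the branching ratio of the super-windows, and the number $L$ of renormalisation levels against the per-level loss, which is governed by the quantity $\epsilon/\eta$ at the scale in question; choosing these as appropriate powers of $\epsilon$ makes the dominant contribution to $\|A-A'\|+\|B-B'\|$ of order $\epsilon^{1/6}$ rather than the $\epsilon^{1/2}$ one would naively hope for from a single scale. The residual prefactor collects the bounded multiplicative constants picked up over the $L$ levels; since the construction keeps $L$ small (growing at most logarithmically in $1/\epsilon$) and arranges the per-level losses to shrink geometrically in the level, this prefactor grows at most poly-logarithmically in $1/\epsilon$ and is therefore absorbed into a sub-polynomial factor $E(1/\epsilon)$. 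Every estimate in the chain --- the off-diagonal decay, the local gluing, and the propagation of errors across levels --- is phrased purely in terms of operator norms and window counts, with no appeal to the ambient dimension, so the final bound is dimension-free.

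I expect the main obstacle to be the local-to-global gluing at each renormalisation level: one must show that the commuting approximants built on overlapping super-windows agree closely enough on their overlaps to be patched into a single commuting pair, with the \emph{only} impediment being the winding index, which has already been argued to be consistent --- and, crucially, one must bound the size of the correction needed to enforce this consistency in terms of the commutator norm at that scale. This is the step that forces the careful bookkeeping of constants and ultimately produces the exponent $1/6$ rather than $1/2$. A secondary difficulty is the reduction step itself: justifying the coarse-graining of $A$ and the peeling of long-range couplings while keeping every constant independent of $n$ and of how many levels have already been carried out, so that the per-level constants genuinely multiply to something sub-polynomial instead of accumulating uncontrollably.
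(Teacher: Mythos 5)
First, note that the paper does not prove this statement at all: Theorem~\ref{thm:approxcomm} is imported verbatim from Hastings \cite{hastings:2008b} and used as a black box, so the only thing your proposal can be measured against is Hastings' own argument, whose overall shape (spectral coarse-graining of $A$, off-diagonal decay of $B$ in the eigenbasis of $A$, a multi-scale construction, dimension-free bookkeeping) you have reproduced at the level of a plan. Your reduction to an almost-normal matrix $Z=A+iB$ is fine as far as it goes ($[Z,Z^*]=-2i[A,B]$, and normality of $Z'$ is equivalent to commutation of its real and imaginary parts), and the block-decay estimate $\|P_jBP_k\|\lesssim(\epsilon+\eta)/(|j-k|\eta)$ is correct and is indeed the starting point of any argument of this type.

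The genuine gap is that everything after that is asserted rather than proved, and the two assertions you lean on are precisely the theorem's content. (i) You claim that the local commuting approximants on overlapping super-windows can be glued, ``the only impediment being the winding index, which has already been argued to be consistent'' --- but you have not argued this anywhere; you have only stated that for finite matrices the obstruction is absent. That qualitative vanishing is essentially Lin's theorem, and what the quantitative proof needs is much more: a scale-local bound on the size of the correction required to match the local approximants, in terms of the commutator norm at that scale. Without that lemma the per-level loss is uncontrolled and no exponent comes out. (ii) The ``optimisation'' paragraph does not exhibit the choice of window width, branching ratio and number of levels, nor the recursion inequality they are supposed to satisfy; the exponent $\tfrac16$ and the sub-polynomial factor $E(1/\epsilon)$ are written down as the desired answer, not derived. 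As it stands the proposal correctly identifies why the naive single-scale truncation fails (the $(\epsilon/\eta)\log(1/\epsilon)$ loss) and names the hard steps of Hastings' proof, but it does not carry out either of them, so it is an outline of a proof strategy rather than a proof.
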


\begin{proof}[Proof of theorem~\ref{thm:approxucomm}]
The first step of the proof is to exploit the presence of the
spectral gap and use theorem~\ref{thm:convlog} to represent the
logarithms $A$ and $B$ of $U = e^{iA}$ and $V = e^{iB}$ via Laurent
series with fast decay:
\begin{equation}
A = \sum_{j = -\infty}^{\infty} c_j U^j
\end{equation}
and
\begin{equation}
B = \sum_{k = -\infty}^{\infty} d_k V^k,
\end{equation}
with
\begin{equation}
|c_j| \le \min\{\pi, C/(\Delta_1j^4)\}, \quad |d_k| \le \min\{\pi,
C/(\Delta_2 k^4)\},
\end{equation}
where $C$ is a constant.

Now that we have an expression for $A$ and $B$ we can work out the
norm of their commutator using the Leibniz property via
\begin{equation}
\begin{split}
[A,B] &= \sum_{j=-\infty}^\infty\sum_{k=-\infty}^\infty c_jd_k [U^j,
V^k] \\
&=\sum_{j=-\infty}^\infty\sum_{k=-\infty}^\infty\sum_{m =
1}^{j}\sum_{n = 1}^{k} c_jd_k V^{m-1}U^{n-1}[U, V]U^{j-n}V^{k-m},
\end{split}
\end{equation}
and then taking the norm of both sides. Applying the triangle
inequality and the unitary invariance of the operator norm gives us
\begin{equation}
\begin{split}
\|[A,B]\| &\le \sum_{j=-\infty}^\infty\sum_{k=-\infty}^\infty
 jk|c_j||d_k| \|[U, V]\| \le \epsilon
\left(\sum_{j=-\infty}^\infty
j|c_j|\right)\left(\sum_{k=-\infty}^\infty
 k|c_k|\right) \\
 &\le \epsilon \frac{\alpha}{\Delta_1\Delta_2},
\end{split}
\end{equation}
where $\alpha$ is a constant coming from $C^2$ and the summation of
the separate infinite series.

The next step is to apply theorem~\ref{thm:approxcomm} to construct
two commuting operators $A'$ and $B'$ such that
\begin{equation}
\|A'-A\| \le \delta(\epsilon {\alpha}/{\Delta_1\Delta_2}),
\quad\mbox{and}\quad \|B'-B\| \le \delta(\epsilon
{\alpha}/{\Delta_1\Delta_2}).
\end{equation}
and then, via exponentiation, we define $U' = e^{iA'}$ and $V' =
e^{iB'}$. The distance between $U'$ and $U$ can be bounded as
follows (following \cite{bratteli:1997a}, p.\ 252):
\begin{equation}
\|U'-U\| \le \int_0^1 \|A'-A\| ds \le \epsilon,
\end{equation}
and similarly for $V'$. Redefining $E(x)$ gives the result.
\end{proof}

\bibliographystyle{amsplain}

\providecommand{\bysame}{\leavevmode\hbox
to3em{\hrulefill}\thinspace}
\providecommand{\MR}{\relax\ifhmode\unskip\space\fi MR }
\providecommand{\MRhref}[2]{%
  \href{http://www.ams.org/mathscinet-getitem?mr=#1}{#2}
} \providecommand{\href}[2]{#2}

\end{document}